\documentclass[a4paper, reqno]{amsart}

\usepackage{amsmath}
\usepackage{amsfonts}
\usepackage{amsthm}
\usepackage{graphicx}
\usepackage{eucal}
\usepackage{amscd}
\usepackage[all,2cell]{xy}
\usepackage{amssymb}
\usepackage{mathrsfs}
\usepackage{tikz-cd}
 \usepackage{tikz}
 \usetikzlibrary{positioning}
 \usepackage{tikz-cd}
 \tikzset{mynode/.style={draw,circle,inner sep=1pt,outer sep=0pt}}

\usepackage[colorlinks=true, linkcolor=blue, citecolor=blue, urlcolor=blue]{hyperref}

\newtheorem{teo}{Theorem}[section]
\newtheorem{cor}[teo]{Corollary}
\newtheorem{lem}[teo]{Lemma}
\newtheorem{defi}[teo]{Definition}

\newtheorem{prop}[teo]{Proposition}
\newtheorem{remark}[teo]{Remark}

\theoremstyle{definition}                
\newtheorem{example}[teo]{Example}
\newtheorem{examples}[teo]{Examples}

\newcommand{\X}{\ensuremath{\mathbb{X}}}
\newcommand{\A}{\ensuremath{\mathbb{A}}}

\newdir{ |>}{{}*!/-3.5pt/@{|}*!/-8pt/:(1,-.2)@^{>}*!/-8pt/:(1,+.2)@_{>}}

\dedicatory{}

\begin{document}

\title{QUASI-POINTED EXACTNESS AND TORSION}

\author{Marino Gran}
\address[Marino Gran]{Institut de Recherche en Math\'ematique et Physique, Universit\'e catholique de Louvain, Chemin du Cyclotron 2, 1348 Louvain-la-Neuve, Belgique}
\thanks{}
\email{marino.gran@uclouvain.be}

\author{George Janelidze}
\address[George Janelidze]{Department of Mathematics and Applied Mathematics, University of Cape Town, Rondebosch 7700, South Africa}
\thanks{}
\email{george.janelidze@uct.ac.za}

\keywords{Quasi-pointed categories, non-pointed torsion theory, pretorsion theory, near-torsion theory, reflective subcategory, coreflective subcategory, Lie-Rinehart algebras, internal groupoids, crossed modules.}

\subjclass[2020]{18E40, 18A30, 18A40, 18D40. }

\begin{abstract}
We show that, under some mild assumptions,  every quasi-pointed category $\mathcal{C}$ carries a canonical near-torsion theory $(\mathcal{T},\mathcal{F})$, where $\mathcal{T}$ consists of the $0$-supported objects, that is, those admitting a morphism to the initial object $0$, and $\mathcal{F}$ consists of the subterminal objects. The coreflection and the reflection of an object $A$ are then given by the projection $A\times 0\to A$ and by the strong-epimorphic part of $A\to 1$, respectively. Using the general construction of the `largest' non-pointed torsion theory associated with a near-torsion theory, we prove that $(\mathcal{T},\mathcal{F})$ is a (non-pointed) torsion theory in the full subcategory of those objects $A$ for which the canonical sequence $t(A)\to A\to f(A)$ is short exact, in the sense that it is both a kernel and a cokernel diagram. This subcategory is the whole of $\mathcal{C}$ whenever $\mathcal{C}$ is sequentiable, that is,  a regular protomodular and quasi-pointed category. We describe the resulting torsion theories in the category of Lie--Rinehart algebras and in the categories of crossed modules over a fixed group. We also show that $(\mathcal{T},\mathcal{F})$ is a torsion theory in the (non-protomodular) category of monoids over a group $G$ all of whose elements have finite order.
\end{abstract}

\date{\today}

\maketitle

\section{Introduction}

A category $\mathcal{C}$ with an initial object $0$ and a terminal object $1$ is \emph{quasi-pointed} when the unique morphism $0\to 1$ is a monomorphism. This notion was introduced by D. Bourn in \cite{[B2001]}, where he showed that one can prove many of the classical homological lemmas, such as the $3 \times 3$-Lemma and the Snake Lemma, in any \emph{sequentiable} category, that is, in any quasi-pointed regular protomodular category. 

The purpose of this paper is to show that every quasi-pointed category carries, under very mild assumptions, a canonical ``torsion theory'', in which the roles of the torsion and of the torsion-free objects are played by two natural classes of objects determined by $0$ and $1$. An object $A$ is \emph{$0$-supported} when it admits a (necessarily unique) morphism $A\to 0$, and \emph{subterminal} when $A\to 1$ is a monomorphism. Writing $\mathcal{T}$ and $\mathcal{F}$ for the full subcategories of $0$-supported and of subterminal objects, respectively, we prove that $\mathcal{T}$ is coreflective in $\mathcal{C}$ as soon as the product $A\times 0$ exists for any object $A$, the $A$-component of the coreflection $\varepsilon_A\colon t(A)\to A$ being the projection $A\times 0\to A$ (Corollary \ref{coreflection}). On the other hand, the subcategory of subterminal objects $\mathcal{F}$ is reflective in $\mathcal{C}$ as soon as every morphism $A\to 1$ admits a (strong epi, mono)-factorization, the $A$ component of the reflection $\eta_A\colon A\to f(A)$ being the strong-epimorphic part of this factorization (Corollary \ref{reflection}). Under these assumptions $(\mathcal{T},\mathcal{F})$ is then a \emph{near-torsion theory} in the sense of our previous paper \cite{[GJ2026]} (Theorem \ref{counit-projection}).

This is exactly the situation studied in \cite{[GJ2026]}, whose main motivation was to introduce a new notion of (non-pointed) torsion theory which does not use non-pointed exactness. The main example considered in that paper also suggested a natural notion of exact sequence of morphisms.
 There we introduced near-torsion theories, that is, pairs $(\mathcal{T},\mathcal{F})$ of a coreflective and a reflective subcategory satisfying two natural compatibility conditions, and we showed how to associate with any such pair the `largest' full subcategory $\mathcal{C}_{\mathcal{T},\mathcal{F}}$ of $\mathcal{C}$ on which $(\mathcal{T},\mathcal{F})$ restricts to a (non-pointed) torsion theory together with the corresponding subcategories $\mathring{\mathcal{T}}$ and $\mathring{\mathcal{F}}$ of $\mathcal{T}$ and $\mathcal{F}$, respectively. The present paper can thus be seen as a detailed study of an important instance of that general construction: the near-torsion theory of a quasi-pointed category. In this situation the general theory of \cite{[GJ2026]} simplifies considerably. We show that $\mathring{\mathcal{T}}=\mathcal{T}$, $\mathring{\mathcal{F}}=\mathcal{F}$, and that an object $A$ belongs to $\mathcal{C}_{\mathcal{T},\mathcal{F}}$ if and only if the canonical sequence
\begin{equation}\label{intro-SES}
\xymatrix{t(A)\ar[r]^-{\varepsilon_A}&A \ar[r]^-{\eta_A}&f(A)}
\end{equation}
is short exact in the sense of \cite{[B2001]}, that is, it is both a kernel diagram and a cokernel diagram. It turns out that the kernel condition holds automatically, hence the only requirement is that the square 
\begin{equation}\label{pushout}
\xymatrix{t(A)\ar[r] \ar[d]_-{\varepsilon_A} &0\ar[d] \\
A \ar[r]_-{\eta_A} & f(A)}
\end{equation}
 is a pushout (Theorem \ref{thm:3.2}). In particular, by Bourn's results on sequentiable categories \cite{[B2001]}, we get that $\mathcal{C}_{\mathcal{T},\mathcal{F}}=\mathcal{C}$ whenever $\mathcal{C}$ is sequentiable, so that $(\mathcal{T},\mathcal{F})$ is then a (non-pointed) torsion theory in $\mathcal{C}$. In the quasi-pointed context, $\mathcal{T}\cap\mathcal{F} =\mathcal Z$ is the full subcategory of initial objects, so that $(\mathcal{T},\mathcal{F})$ is also a \emph{pretorsion theory} in $\mathcal{C}_{\mathcal{T},\mathcal{F}}$ in the sense of \cite{[FF2020],[FFG2021]}, the exactness of \eqref{intro-SES} being then understood relatively to the ideal of morphisms factoring through an initial object in $\mathcal Z$.

We then describe this canonical torsion theory in the category of Lie--Rinehart algebras \cite{Rinehart}, and in the categories of internal reflexive graphs and groupoids in the category of groups over a fixed group $B$ (Examples \ref{Lie+crossed}).
In the case of the Lie--Rinehart algebras over $(R,A)$, where $R$ is a commutative unital ring and $A$ a commutative $R$-algebra, the torsion objects are the Lie--Rinehart algebras with trivial anchor and the torsion-free ones are the Lie--Rinehart subalgebras of $\mathsf{Der}(A)$. In the category of internal reflexive graphs over a fixed group $B$, the torsion objects are the ``points'' over $B$ \cite{[B1991]} and the torsion-free ones are the equivalence relations on $B$. In the category of internal groupoids over $B$, the torsion objects are Beck modules over $B$ \cite{Beck} and the torsion-free ones are again the equivalence relations on $B$.
In the category of crossed modules \cite{Whitehead} over a group $B$, the torsion objects are the $B$-modules and the torsion-free ones are the normal subgroups of $B$. The same descriptions hold if the category of groups is replaced by any semi-abelian category in the sense of \cite{JMT}. We conclude the paper by showing that protomodularity is not necessary for $(\mathcal{T},\mathcal{F})$ to be a torsion theory in the whole of $\mathcal{C}$: indeed, this is the case for the slice category $(\mathsf{Mon}\downarrow G)$ of monoids over a group $G$ all of whose elements have finite order, where $\mathcal{T}$ is the category of monoids and $\mathcal{F}$ is the poset of subgroups of $G$.

\section{The canonical near-torsion theory}

\begin{prop}\label{quasi-pointed}
	For a category $\mathcal{C}$ with an initial object $0$ and a terminal object $1$, the following conditions are equivalent:
	\begin{itemize}
		\item [(a)] the morphism $0\to1$ is a monomorphism;
		\item [(b)] for every object $A$ in $\mathcal{C}$, the uniquely determined morphism $!_A:0\to A$ is a monomorphism;
		\item [(c)] for every object $A$ in $\mathcal{C}$, there exists at most one morphism $A\to0$;
		\item [(d)] the forgetful functor $(\mathcal{C}\downarrow0)\to\mathcal{C}$ is fully faithful.
	\end{itemize}
\end{prop}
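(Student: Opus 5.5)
I would prove the cycle of implications (a)$\Rightarrow$(b)$\Rightarrow$(c)$\Rightarrow$(a), and then show (c)$\Leftrightarrow$(d) separately. Each step is an elementary argument using only the universal properties of $0$ and $1$.

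For (a)$\Rightarrow$(b), given $A$, the composite $0\to A\to 1$ equals $0\to 1$, because $1$ is terminal. Suppose $u,v\colon X\to 0$ satisfy $!_A u=!_A v$. Composing with $A\to 1$ gives $(0\to 1)u=(0\to 1)v$, so $u=v$ by (a). Thus $!_A$ is a monomorphism.

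For (b)$\Rightarrow$(c), let $u,v\colon A\to 0$. Then $!_A u$ and $!_A v$ are both endomorphisms of $0$, so both equal $1_0$ because $0$ is initial. Since $!_A$ is monic, $u=v$.

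For (c)$\Rightarrow$(a), let $u,v\colon X\to 0$ with $(0\to1)u=(0\to1)v$. These are two morphisms $X\to 0$, so $u=v$ by (c), and $0\to 1$ is a monomorphism. Strictly speaking, (c) already gives $u=v$ without using the hypothesis $(0\to1)u=(0\to1)v$.

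For (c)$\Rightarrow$(d), take objects $(A,a)$ and $(B,b)$ of $(\mathcal{C}\downarrow 0)$. The forgetful functor is always faithful. For fullness, let $f\colon A\to B$ be any morphism. Then $bf$ and $a$ are both morphisms $A\to 0$, so $bf=a$ by (c), and $f$ is a morphism in the slice.

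For (d)$\Rightarrow$(c), suppose $u,v\colon A\to 0$. Then $(A,u)$ and $(A,v)$ are objects of the slice, and $1_A\colon A\to A$ must lift to a slice morphism $(A,u)\to(A,v)$ by fullness. This means $v\cdot 1_A=u$, so $u=v$.

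No step is a genuine obstacle; the only point requiring care is this direction (d)$\Rightarrow$(c), where the identity morphism has to be applied to two different structure maps on the same object $A$.
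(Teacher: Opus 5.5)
\textbf{Gap in (b)$\Rightarrow$(c).} This step does not work as written. You take $u,v\colon A\to 0$ and $!_A\colon 0\to A$, and you write composition right to left, as in the rest of your proof. Then $!_A u$ and $!_A v$ are endomorphisms of $A$, not of $0$, and nothing forces them to be equal.

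The composites that really are endomorphisms of $0$ are $u\,!_A$ and $v\,!_A$. These are indeed both $1_0$, but that only says $u$ and $v$ are retractions of $!_A$. To cancel $!_A$ on the right you would need $!_A$ to be an epimorphism, not a monomorphism. This is the only implication in your cycle leading from (a) or (b) back to (c). Without it, you have (c)$\Rightarrow$(a)$\Rightarrow$(b) and (c)$\Leftrightarrow$(d), but (a) and (b) are not shown to imply (c) or (d).

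\textbf{Repair.} Apply (b) to the terminal object instead of to $A$, which is just (a). The composites $(0\to 1)\,u$ and $(0\to 1)\,v$ are two morphisms $A\to 1$, hence equal because $1$ is terminal. Since $0\to 1$ is monic, $u=v$. This is the paper's argument for (a)$\Leftrightarrow$(c). The point you missed is that terminality of $1$, not initiality of $0$, makes the two composites coincide.

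Your other steps are correct and match the paper: (a)$\Rightarrow$(b) via the composite $0\to A\to 1$, the trivial (c)$\Rightarrow$(a), and both directions of (c)$\Leftrightarrow$(d), including the use of $1_A\colon (A,u)\to(A,v)$ for (d)$\Rightarrow$(c).
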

\begin{proof}
	This is very easy: For (a)$\Rightarrow$(b), just use the composite $0\to A\to 1$. (b)$\Rightarrow$(a) is trivial. For (a)$\Leftrightarrow$(c) use the diagram $$\xymatrix{A\ar@<0.5ex>[r]\ar@<-0.5ex>[r]&0\ar[r]&1}$$ (d) holds if and only if every diagram of the form $$\xymatrix{A\ar[dr]\ar[rr]&&B\ar[dl]\\&0}$$ commutes, which gives (d)$\Leftrightarrow$(c).
\end{proof}
\begin{defi}
	\emph{A category $\mathcal{C}$ with an initial object $0$ and a terminal object $1$ is said to be} quasi-pointed \emph{if it satisfies the equivalent conditions of Proposition \ref{quasi-pointed}.}
\end{defi}
The term ``quasi-pointed'' was first used by D. Bourn, who defined it in \cite{[B2001]} by the condition we labeled here as \ref{quasi-pointed}(a).

An object $A$ in a category with a terminal object $1$, is often called \textit{subterminal} if the uniquely determined morphism $A\to 1$ is a monomorphism. On the other hand, we will call an object $A$ in a quasi-pointed category $0$\textit{-supported} if it admits a morphism $A\to0$. In particular, $A$ is 0-supported and subterminal at the same time if and only if it is initial; we might then informally write $A=0$.\vspace{1mm}

\textit{From now on we will assume that our ambient category $\mathcal{C}$ is quasi-pointed and write $\mathcal{T}$ and $\mathcal{F}$ for the full subcategories of $\mathcal{C}$ with objects all $0$-supported objects in $\mathcal{C}$ and all subterminal objects in $\mathcal{C}$, respectively. }

\begin{lem}\label{torsion}
	The following conditions on an object $A$ in $\mathcal{C}$ are equivalent:
	\begin{itemize}
		\item [(a)] $A\in\mathcal{T}$;
		\item [(b)] there is a (unique) morphism $A\to0$, which is a universal arrow for the inclusion functor $\mathcal{F}\to\mathcal{C}$, that is, for every morphism $\alpha:A\to F$ with $F\in\mathcal{F}$, the diagram $$\xymatrix{A\ar[dr]_\alpha\ar[r]&0\ar[d]\\&F}$$ commutes. 
	\end{itemize}
\end{lem}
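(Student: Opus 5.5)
The implication (b)$\Rightarrow$(a) needs no work: (b) asserts the existence of a morphism $A\to0$, and that is the definition of $A\in\mathcal{T}$. The uniqueness mentioned in (b) is the quasi-pointedness condition, Proposition \ref{quasi-pointed}(c).

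For (a)$\Rightarrow$(b), assume $A\in\mathcal{T}$, so there is a morphism $p\colon A\to 0$, unique by Proposition \ref{quasi-pointed}(c). First I check that $0\in\mathcal{F}$, so that $p$ is a candidate universal arrow. Indeed, $0\to 1$ is a monomorphism by Proposition \ref{quasi-pointed}(a), which says exactly that $0$ is subterminal.

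Next, let $\alpha\colon A\to F$ with $F\in\mathcal{F}$. I must show $!_F\circ p=\alpha$, where $!_F\colon 0\to F$. Both morphisms go from $A$ to $F$. Composing each with the monomorphism $F\to 1$ gives two morphisms $A\to 1$, and these coincide because $1$ is terminal. Since $F\to 1$ is a monomorphism, $!_F\circ p=\alpha$. The factorization of $\alpha$ through $p$ is unique because $p$ is split epi: $p\circ\, !_A=1_0$, as $0$ is initial. Alternatively, uniqueness follows since any morphism $0\to F$ equals $!_F$. So $p$ is a universal arrow from $A$ to the inclusion $\mathcal{F}\to\mathcal{C}$.

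There is no real obstacle here. The only point needing care is to note explicitly that $0$ lies in $\mathcal{F}$ and that uniqueness of the factorizing morphism comes from initiality of $0$.
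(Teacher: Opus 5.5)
Your proof is correct and follows essentially the same route as the paper: (b)$\Rightarrow$(a) is immediate, and for (a)$\Rightarrow$(b) you compose with the monomorphism $F\to 1$ and use that any two parallel morphisms into $1$ coincide. Your explicit checks that $0\in\mathcal{F}$ and that the factorization through $A\to 0$ is unique are details the paper leaves implicit.
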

\begin{proof}
	(a)$\Rightarrow$(b): Just note that we have a monomorphism $F\to1$, and every two parallel morphisms with codomain $1$ are equal. (b)$\Rightarrow$(a) is trivial.  
\end{proof}
\begin{lem}\label{torsion-free}
	For every object $A$ in $\mathcal{F}$, the (unique) morphism $0\to A$ is a couniversal arrow for the inclusion functor $\mathcal{T}\to\mathcal{C}$, that is, for every morphism $\alpha:T\to A$ with $T\in\mathcal{T}$, the diagram $$\xymatrix{T\ar[d]\ar[dr]^\alpha\\0\ar[r]&A}$$ commutes.
\end{lem}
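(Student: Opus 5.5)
We must show that for $A\in\mathcal{F}$ and any $\alpha\colon T\to A$ with $T\in\mathcal{T}$, we have $\alpha = {!_A}\circ\tau$, where $\tau\colon T\to 0$ is the morphism witnessing that $T$ is $0$-supported. Since $A$ is subterminal, the morphism $A\to 1$ is a monomorphism. It therefore suffices to check that the two parallel morphisms $\alpha$ and ${!_A}\circ\tau$ from $T$ to $A$ become equal after composing with $A\to 1$. That is immediate, because any two parallel morphisms with codomain the terminal object $1$ coincide. This is exactly the dual of the argument used for Lemma \ref{torsion}, with the roles of $0$ and $1$ interchanged only partially.

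For the couniversal-arrow formulation we also need uniqueness of the factorization through $0\to A$. Suppose $\alpha = {!_A}\circ\beta$ for some $\beta\colon T\to 0$. Then $\beta$ is uniquely determined because, by Proposition \ref{quasi-pointed}(c), there is at most one morphism $T\to 0$. Alternatively, one can argue that $!_A$ is a monomorphism by Proposition \ref{quasi-pointed}(b).

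I do not expect any real obstacle. The only point requiring care is remembering that the commutativity is checked via the mono $A\to 1$, rather than via any property of $0$; quasi-pointedness is used only for uniqueness.
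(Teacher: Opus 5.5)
Your proof is correct and is essentially the paper's argument: you compose with the monomorphism $A\to 1$ and use that parallel morphisms into $1$ coincide, and your remark on uniqueness via Proposition \ref{quasi-pointed}(c) spells out what the paper leaves implicit. One small correction: this is the same argument as for Lemma \ref{torsion}, which also relies on a monomorphism into $1$, not its dual.
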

\begin{proof}
	Now we have a monomorphism $A\to1$, and again we can use the fact that every two parallel morphisms with codomain $1$ are equal.
\end{proof}
We then recall the following definition from \cite{[GJ2026]}:
\begin{defi}\label{near}
	\emph{A pair $(\mathcal{T},\mathcal{F})$ of full replete subcategories of a category $\mathcal{C}$ will be called a} near-torsion theory \emph{in $\mathcal{C}$ if it satisfies the following conditions:}
	\begin{itemize}
		\item [(a)] \emph{$\mathcal{T}$ is a coreflective subcategory of $\mathcal{C}$, that is, the inclusion functor $\mathcal{T}\to\mathcal{C}$ has a left inverse right adjoint $t:\mathcal{C}\to\mathcal{T}$;}
		\item [(b)] \emph{$\mathcal{F}$ is a reflective subcategory of $\mathcal{C}$, that is, the inclusion functor $\mathcal{F}\to\mathcal{C}$ has a left inverse left adjoint $f:\mathcal{C}\to\mathcal{F}$;}
		\item [(c)] \emph{$A\in\mathcal{F}\Rightarrow t(A)\in\mathcal{F}$;}
		\item [(d)] \emph{$A\in\mathcal{T}\Rightarrow f(A)\in\mathcal{T}$.}
	\end{itemize}
\end{defi}
From the two previous lemmas we obtain:
\begin{cor}
	The following conditions are equivalent:
	\begin{itemize}
		\item [(a)] $\mathcal{T}$ and $\mathcal{F}$ are coreflective and reflective, respectively, in $\mathcal{C}$;
		\item [(b)] $(\mathcal{T},\mathcal{F})$ is a near-torsion theory in $\mathcal{C}$ in the sense of Definition \ref{near} .\qed
	\end{itemize}
\end{cor}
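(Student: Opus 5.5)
(b)$\Rightarrow$(a) is immediate from Definition \ref{near}(a),(b). For (a)$\Rightarrow$(b) we only need conditions (c) and (d), and the plan is to show that when $A\in\mathcal{F}$ the coreflection $t(A)$ is initial, and dually when $A\in\mathcal{T}$ the reflection $f(A)$ is initial. Since $0$ is both $0$-supported (via $1_0$) and subterminal (by Proposition \ref{quasi-pointed}(a)), it lies in $\mathcal{T}\cap\mathcal{F}$. Replete subcategories are closed under isomorphism, so $t(A)\cong 0$ gives $t(A)\in\mathcal{F}$, and $f(A)\cong 0$ gives $f(A)\in\mathcal{T}$.

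For (c), let $A\in\mathcal{F}$ and let $\varepsilon_A\colon t(A)\to A$ be the couniversal arrow. By Lemma \ref{torsion-free}, the morphism $!_A\colon 0\to A$ is also couniversal for $\mathcal{T}\to\mathcal{C}$, and $0\in\mathcal{T}$. Two couniversal arrows into the same object are uniquely isomorphic over $A$, so $t(A)\cong 0$. To spell this out without invoking the general fact: $\varepsilon_A$ factors as $!_A\circ u$ for some $u\colon t(A)\to 0$ (Lemma \ref{torsion-free}), and $!_A$ factors uniquely through $\varepsilon_A$ via $v\colon 0\to t(A)$. Then $u v=1_0$ because $0$ is initial. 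Also $\varepsilon_A(vu)=!_A u=\varepsilon_A$, so $vu=1_{t(A)}$ by the uniqueness in the couniversal property. Hence $t(A)\cong 0$.

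For (d), the argument is dual. Let $A\in\mathcal{T}$ with reflection $\eta_A\colon A\to f(A)$. By Lemma \ref{torsion}, the morphism $A\to 0$ is universal for $\mathcal{F}\to\mathcal{C}$, and $0\in\mathcal{F}$, so uniqueness of universal arrows gives $f(A)\cong 0$. Alternatively, one can argue directly: $f(A)$ receives $A\to0\to f(A)$ and $\eta_A$ factors through $0$. The resulting morphism $f(A)\to 0$ composed with $0\to f(A)$ is the identity, because both are factorizations of $\eta_A$ through $\eta_A$. The other composite is the identity on $0$.

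There is no real obstacle here. The only points needing care are that $0$ itself belongs to both subcategories, which uses quasi-pointedness for $0\in\mathcal{F}$, and that repleteness lets us pass from $t(A)\cong 0$ and $f(A)\cong 0$ to membership in $\mathcal{F}$ and $\mathcal{T}$ respectively. The subcategory $\mathcal{T}$ is visibly replete, and $\mathcal{F}$ is replete since monomorphisms are stable under composition with isomorphisms.
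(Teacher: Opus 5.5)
Your proof is correct and follows the paper's route: the corollary is stated as an immediate consequence of Lemmas \ref{torsion} and \ref{torsion-free}, which say that $A\to 0$ is a universal arrow into $\mathcal{F}$ for $A\in\mathcal{T}$ and that $0\to A$ is a couniversal arrow from $\mathcal{T}$ for $A\in\mathcal{F}$. Hence $f(A)\cong 0\cong t(A)$, and $0\in\mathcal{T}\cap\mathcal{F}$ finishes the argument, just as the paper later uses $tf(B)=0=ft(B)$. One small slip: in your ``direct'' argument for (d), the morphism $f(A)\to 0$ should be named explicitly as the factorization of $A\to 0$ through $\eta_A$, which exists because $0\in\mathcal{F}$; your main argument via uniqueness of universal arrows already covers this.
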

\begin{lem}\label{couniversal}
	The following conditions on a morphism $m:T\to A$ in $\mathcal{C}$ with $T\in\mathcal{T}$ are equivalent:
	\begin{itemize}
		\item [(a)] $m$ is a couniversal arrow for the inclusion functor $\mathcal{T}\to\mathcal{C}$;
		\item [(b)] the diagram $$\xymatrix{A&T\ar[l]_-m\ar[r]&0}$$ is a product diagram.
	\end{itemize}
	Furthermore, if these conditions hold, then $m$ is a monomorphism.
\end{lem}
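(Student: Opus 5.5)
We will prove (a)$\Leftrightarrow$(b) by comparing the two universal properties directly. Two facts about $\mathcal{C}$ do most of the work. First, every object $X$ admitting a morphism to $0$ lies in $\mathcal{T}$, and that morphism is unique by Proposition \ref{quasi-pointed}(c). Second, every morphism $0\to T$ is the unique one, since $0$ is initial. We write $\tau\colon T\to 0$ for the unique morphism.

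For (a)$\Rightarrow$(b), let $X$ be an object with morphisms $x\colon X\to A$ and $y\colon X\to 0$. Then $X\in\mathcal{T}$, so couniversality of $m$ gives a unique $u\colon X\to T$ with $mu=x$. Moreover $\tau u=y$ automatically, because both are morphisms $X\to 0$. For uniqueness, any $u'$ with $mu'=x$ and $\tau u'=y$ coincides with $u$ by couniversality alone. Hence $(T,m,\tau)$ is a product of $A$ and $0$.

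For (b)$\Rightarrow$(a), let $T'\in\mathcal{T}$ and let $g\colon T'\to A$ be any morphism. The object $T'$ has a morphism $y\colon T'\to 0$, so the product property yields a unique $u\colon T'\to T$ with $mu=g$ and $\tau u=y$. To see that $u$ is the unique morphism with $mu=g$, take any $u'$ with $mu'=g$. Then $\tau u'$ is a morphism $T'\to 0$ and therefore equals $y$, so $u'=u$ by the uniqueness in the product property. This proves couniversality.

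For the monomorphism claim, assume (b) and let $a,b\colon X\to T$ satisfy $ma=mb$. The composites $\tau a$ and $\tau b$ are both morphisms $X\to 0$, so they are equal by Proposition \ref{quasi-pointed}(c). Since $m$ and $\tau$ are the projections of a product and are jointly monic, it follows that $a=b$.

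The only real subtlety, and the place where quasi-pointedness is used, is the automatic agreement of the second component: any two morphisms into $0$ are equal.
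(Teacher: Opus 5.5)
Your proposal is correct and follows the paper's proof exactly. You use the same diagram for both directions, with Proposition \ref{quasi-pointed}(c) forcing the second component into $0$ to agree automatically, and you deduce the monomorphism claim from the joint monicity of the product projections together with (c).
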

\begin{proof}
	(a)$\Rightarrow$(b): Suppose (a) holds and consider the diagram $$\xymatrix{&X\ar[dl]_\varphi\ar@{.>}[d]^\psi\ar[dr]\\A&T\ar[l]^-m\ar[r]&0}$$ where the solid arrows are given. This forces $X$ to belong to $\mathcal{T}$ and so determines a unique $\psi$ making the left-hand triangle commute. But then the right-hand triangle also commutes by \ref{quasi-pointed}(c). (b)$\Rightarrow$(a): Use the same diagram and \ref{quasi-pointed}(c) again. The fact that $m$ is a monomorphism follows from the fact that the two product projections are always jointly monic and \ref{quasi-pointed}(c). 
\end{proof}
\begin{cor}\label{coreflection}
	$\mathcal{T}$ is a coreflective subcategory of $\mathcal{C}$ if and only if, for every object $A$ in $\mathcal{C}$, the product $A\times0$ exists. Furthermore, in this case it is mono-coreflective.\qed
\end{cor}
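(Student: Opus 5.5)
The plan is to reduce everything to Lemma \ref{couniversal}, which identifies the couniversal arrows from $\mathcal{T}$ with the first projections of product diagrams of the form $A\leftarrow T\rightarrow 0$. Recall the standard fact that $\mathcal{T}$ is coreflective precisely when every object $A$ admits a couniversal arrow $m\colon T\to A$ with $T\in\mathcal{T}$. The coreflector $t$ is then defined on objects by $t(A)=T$ and extended to morphisms by couniversality. It is a left inverse of the inclusion because $\mathcal{T}$ is replete and identities are couniversal on objects of $\mathcal{T}$.

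For the ``only if'' direction, I assume $\mathcal{T}$ is coreflective and let $\varepsilon_A\colon t(A)\to A$ be the counit component. Since $t(A)\in\mathcal{T}$, there is a morphism $t(A)\to 0$. Lemma \ref{couniversal}, (a)$\Rightarrow$(b), says that $A\leftarrow t(A)\to 0$ is a product diagram, so $A\times 0$ exists.

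For the ``if'' direction, I take a product $A\xleftarrow{\pi_1}A\times 0\xrightarrow{\pi_2}0$. The second projection shows that $A\times 0$ is $0$-supported, so $A\times 0\in\mathcal{T}$. Lemma \ref{couniversal}, (b)$\Rightarrow$(a), then shows that $\pi_1$ is couniversal. Having such an arrow for every $A$ gives the right adjoint $t(A)=A\times 0$, with counit the projection.

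Mono-coreflectivity, meaning that every counit component is a monomorphism, follows from the final assertion of Lemma \ref{couniversal}, since each counit component is couniversal. There is no real obstacle here; the only point needing care is the standard passage from ``couniversal arrows exist for every object'' to ``coreflective'' in the paper's sense of a left inverse right adjoint. That passage uses repleteness of $\mathcal{T}$, so that $t$ can be chosen with $t(T)=T$ and counit the identity for $T\in\mathcal{T}$.
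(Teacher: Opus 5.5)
Your proposal is correct and is exactly the argument the paper intends: the corollary is recorded with \qed as an immediate consequence of Lemma \ref{couniversal}. You apply (a)$\Rightarrow$(b) for necessity, (b)$\Rightarrow$(a) together with $A\times 0\in\mathcal{T}$ for sufficiency, and the lemma's final assertion for mono-coreflectivity. One small point: choosing $t(T)=T$ with $\varepsilon_T=1_T$ for $T\in\mathcal{T}$ only uses that $T\xleftarrow{1_T}T\to 0$ is a product, i.e.\ that identities on objects of $\mathcal{T}$ are couniversal; it does not need repleteness, although $\mathcal{T}$ is replete anyway.
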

Note that we could easily deduce Lemma \ref{torsion-free} from Lemma \ref{couniversal}.  Furthermore, under the assumption that any morphism $A \rightarrow 1$ admits a (strong epi, mono)-factorization, Lemma \ref{torsion} can be deduced from the following one, since in a quasi-pointed category, the unique morphism $A \rightarrow 0$ is a split epimorphism for any $A \in \mathcal T$:
\begin{lem}\label{Lemma-factorisation}
	Suppose every morphism in $\mathcal{C}$ of the form $A\to1$ admits a (strong epi, mono)-factorization. Then the following conditions on a morphism $e:A\to F$ in $\mathcal{C}$ with $F\in\mathcal{F}$ are equivalent:
	\begin{itemize}
		\item [(a)] $e$ is a universal arrow for the inclusion functor $\mathcal{F}\to\mathcal{C}$;
		\item [(b)] $e$ is a strong epimorphism.
	\end{itemize} 
\end{lem}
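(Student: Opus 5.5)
The plan is to compare $e$ with the canonical factorization of $A\to1$. Fix the (strong epi, mono)-factorization $A\xrightarrow{p}I\xrightarrow{i}1$ of the unique morphism $A\to1$; then $I\in\mathcal{F}$.

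First I would show that $p$ is a universal arrow for the inclusion $\mathcal{F}\to\mathcal{C}$. Take a morphism $\alpha:A\to F'$ with $F'\in\mathcal{F}$, and let $j:F'\to1$ be the monomorphism. The square with top $p$, left $\alpha$, bottom $j$ and right $i$ commutes trivially, since both composites end in $1$. Because $p$ is a strong epimorphism and $j$ is a monomorphism, the diagonal fill-in gives $\beta:I\to F'$ with $\beta p=\alpha$. Uniqueness of $\beta$ follows either from $p$ being an epimorphism, or directly from $F'$ being subterminal, since all parallel morphisms into $F'$ coincide (any two have equal composites with the mono $j$). So $p$ is universal.

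(a)$\Rightarrow$(b): if $e$ is universal, then by uniqueness of universal arrows up to isomorphism there is an isomorphism $\theta:I\to F$ with $\theta p=e$. Hence $e$ is a strong epimorphism, being the composite of a strong epimorphism with an isomorphism.

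(b)$\Rightarrow$(a): if $e:A\to F$ is a strong epimorphism with $F\in\mathcal{F}$, then $A\xrightarrow{e}F\to1$ is also a (strong epi, mono)-factorization of $A\to1$. By uniqueness of such factorizations up to isomorphism, again via the diagonal fill-in in both directions, $e\cong p$ under $A$. Therefore $e$ is universal by the first step. Alternatively, the first step's argument applies verbatim with $e$ in place of $p$, and this avoids the comparison entirely.

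No serious obstacle is expected. The only point needing care is the universality step, where the diagonal property is applied to a square whose commutativity is automatic because the corner is $1$, and where uniqueness of the factorization through $F'$ uses subterminality.
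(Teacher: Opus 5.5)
Your proposal is correct and matches the paper's proof: (b)$\Rightarrow$(a) uses the diagonal fill-in of a strong epimorphism against the monomorphism $F'\to1$, and (a)$\Rightarrow$(b) applies this to the strong-epi part $p$ of $A\to1$ and concludes by uniqueness of universal arrows up to isomorphism. The only difference is ordering: the paper proves (b)$\Rightarrow$(a) for $e$ directly and then specializes to $p$, which is your ``verbatim'' alternative.
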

\begin{proof}
	 For (b)$\Rightarrow$(a), use the orthogonality diagram $$\xymatrix{A\ar[d]_\varphi\ar[r]^-e&F\ar@{.>}[dl]\ar[d]\\X\ar[r]&1}$$ where $\varphi$ is any morphism from $A$ to a subterminal object. For (a)$\Rightarrow$(b), factor the morphism $A\to1$ as a strong epimorphism $p:A\to I$ followed by a monomorphism $I\to1$; then $I\in\mathcal{F}$ and, by (b)$\Rightarrow$(a), $p$ is a universal arrow for the inclusion functor $\mathcal{F}\to\mathcal{C}$, so that $e$ is isomorphic to the strong epimorphism $p$.
\end{proof}
\begin{cor}\label{reflection}
	$\mathcal{F}$ is a reflective subcategory of $\mathcal{C}$ whenever every morphism in $\mathcal{C}$ of the form $A\to1$ admits a (strong epi, mono)-factorization. Furthermore, in this case it is strong epi-reflective.\qed
\end{cor}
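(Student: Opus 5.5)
The claim is that reflectivity of $\mathcal{F}$ follows from the existence of (strong epi, mono)-factorizations of the morphisms $A\to1$, and that the resulting reflection has strong-epimorphic units. I would prove it by constructing the unit of the reflection directly. For each object $A$ in $\mathcal{C}$, factor the unique morphism $A\to 1$ as a strong epimorphism $\eta_A\colon A\to f(A)$ followed by a monomorphism $f(A)\to 1$. The second factor being a monomorphism means precisely that $f(A)$ is subterminal, so $f(A)\in\mathcal{F}$.

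Next I apply Lemma \ref{Lemma-factorisation}. Its hypothesis is exactly the one we are assuming. The implication (b)$\Rightarrow$(a) shows that $\eta_A$, being a strong epimorphism with codomain in $\mathcal{F}$, is a universal arrow from $A$ to the inclusion functor $\mathcal{F}\to\mathcal{C}$. Concretely, for any $\varphi\colon A\to X$ with $X\in\mathcal{F}$, the square formed by $\eta_A$, $\varphi$, the monomorphism $X\to 1$ and $f(A)\to 1$ commutes because $1$ is terminal. Orthogonality of the strong epimorphism $\eta_A$ against the monomorphism $X\to 1$ then yields a unique diagonal $f(A)\to X$ through which $\varphi$ factors.

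Since every object admits such a universal arrow, the standard pointwise construction of adjoints gives a left adjoint $f\colon\mathcal{C}\to\mathcal{F}$ to the inclusion, with unit $\eta$. It can be taken left inverse to the inclusion: for $A\in\mathcal{F}$ the morphism $A\to1$ is already a monomorphism, so we may choose the factorization $1_A$ followed by $A\to 1$, giving $f(A)=A$ and $\eta_A=1_A$. This uses that $\mathcal{F}$ is replete and full. Every unit component is a strong epimorphism by construction, which is the "strong epi-reflective" clause. Moreover, by (a)$\Rightarrow$(b) of Lemma \ref{Lemma-factorisation}, any other choice of reflection would also have strong-epimorphic units.

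There is no real obstacle here. The only point needing care is the orthogonality step, where one must check that the relevant square commutes. That holds automatically because both composites land in the terminal object.
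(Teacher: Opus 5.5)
Your proof is correct and follows the paper's route: the corollary is an immediate consequence of Lemma \ref{Lemma-factorisation}, whose (b)$\Rightarrow$(a) direction makes the strong-epimorphic part of $A\to 1$ a universal arrow for the inclusion $\mathcal{F}\to\mathcal{C}$, exactly as the paper uses it in the proof of (a)$\Rightarrow$(b) of that lemma. Your explicit choice $\eta_A=1_A$ for $A\in\mathcal{F}$ settles the left-inverse requirement of Definition \ref{near}(b), which the paper leaves implicit.
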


Summarizing and using the notation of \cite{[GJ2026]} we obtain:
\begin{teo}\label{counit-projection}
	Let $\mathcal{C}$ be a quasi-pointed category in which, for every $A$, the product $A\times0$ exists and the morphism $A\to1$ admits a (strong epi, mono)-factorization. Then $$(\mathcal{T},\mathcal{F})=(\mathcal{C},\mathcal{T},\mathcal{F},t,\varepsilon,f,\eta)$$ is a near-torsion theory in $\mathcal{C}$, in which, for every object $A$ in $\mathcal{C}$:
	\begin{itemize}
		\item [(a)] $\varepsilon_A:t(A)\to A$ can be chosen to be the product projection $A\times0\to A$, and it is a monomorphism;
		\item [(b)] $\eta_A:A\to f(A)$ can be chosen to be any strong epimorphism $A\to F$ with subterminal $F$.
	\end{itemize}
\end{teo}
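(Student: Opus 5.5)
The plan is to assemble the four conditions of Definition \ref{near} from the results already established. Conditions (a) and (b) are handled first. Since every product $A\times 0$ exists, Corollary \ref{coreflection} gives that $\mathcal{T}$ is coreflective. More concretely, I will take $t(A)=A\times 0$ and $\varepsilon_A$ the first projection. Here $A\times 0$ is $0$-supported via the second projection, so it lies in $\mathcal{T}$. Lemma \ref{couniversal}, direction (b)$\Rightarrow$(a), then shows that $\varepsilon_A$ is a couniversal arrow, and the same lemma says it is a monomorphism. This proves item (a) of the statement. Any other choice of couniversal arrow differs from this one by a unique isomorphism, which is the sense of ``can be chosen''.

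Similarly, since every $A\to 1$ has a (strong epi, mono)-factorization $A\to I\to 1$, Corollary \ref{reflection} gives that $\mathcal{F}$ is reflective. Lemma \ref{Lemma-factorisation} shows that a morphism $e:A\to F$ with $F$ subterminal is a universal arrow exactly when it is a strong epimorphism. Hence any such strong epimorphism can serve as $\eta_A$; at least one exists, namely $A\to I$. This proves item (b) of the statement. The left-inverse requirement in Definition \ref{near} holds because $\mathcal{T}$ and $\mathcal{F}$ are full and replete. For $A$ already in the subcategory, the identity is a (co)universal arrow, so we may choose $t$ and $f$ to be identity on $\mathcal{T}$ and $\mathcal{F}$ respectively.

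It remains to check conditions (c) and (d), and this is the only step not directly stated earlier, though it is short.
\begin{itemize}
\item[(c)] If $A\in\mathcal{F}$, then $t(A)\to A$ is a monomorphism and $A\to 1$ is a monomorphism. Their composite $t(A)\to 1$ is therefore monic, so $t(A)\in\mathcal{F}$. In fact $t(A)\in\mathcal{T}\cap\mathcal{F}$, so it is initial.
\item[(d)] If $A\in\mathcal{T}$, Lemma \ref{torsion} says that $A\to 0$ is a universal arrow for $\mathcal{F}\to\mathcal{C}$; note that $0$ is subterminal because $0\to 1$ is monic. By uniqueness of universal arrows up to isomorphism, $f(A)\cong 0$, and so $f(A)\in\mathcal{T}$ by repleteness.
\end{itemize}
Alternatively, for (d) one can use the splitting $0\to A\to 0$: the morphism $A\to 0$ is a split, hence strong, epimorphism onto a subterminal object, and Lemma \ref{Lemma-factorisation} applies.

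I expect no genuine obstacle. The only care needed is to state precisely that ``can be chosen'' refers to the uniqueness of (co)universal arrows up to unique isomorphism. Beyond that, one should verify that replacing $\varepsilon_A$ and $\eta_A$ by these canonical representatives keeps $t$ and $f$ functorial. This follows automatically, because they are defined from (co)universal arrows.
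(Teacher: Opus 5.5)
Your proposal is correct and follows essentially the paper's route. Like the paper, you assemble Corollary \ref{coreflection} with Lemma \ref{couniversal}, Corollary \ref{reflection} with Lemma \ref{Lemma-factorisation}, and the universal-arrow lemmas for conditions (c) and (d) of Definition \ref{near}; the only cosmetic difference is in (c), where you compose the monomorphisms $\varepsilon_A$ and $A\to 1$ instead of invoking Lemma \ref{torsion-free} to get $t(A)\cong 0$ directly, and both arguments are valid.
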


\section{From near-torsion to torsion}

In this section we keep the notation and assumptions of Theorem \ref{counit-projection}.
\begin{lem}\label{lemma-pullback}
	For every object $A$ in $\mathcal{C}$, the diagram \eqref{pushout} is a pullback.
\end{lem}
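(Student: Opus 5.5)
We verify the universal property of a pullback directly for the square \eqref{pushout}, with $t(A)=A\times 0$, $\varepsilon_A$ the first projection and $t(A)\to 0$ the second projection (Theorem \ref{counit-projection}(a)). First, the square commutes: the two composites $t(A)\to f(A)$ are parallel morphisms into the subterminal object $f(A)$. Composing them with the monomorphism $f(A)\to 1$ yields equal morphisms into $1$, so they are already equal. (Alternatively, one can appeal to Lemma \ref{torsion-free}, since $t(A)\in\mathcal{T}$ and $f(A)\in\mathcal{F}$.)

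Now let $\varphi\colon X\to A$ and $\psi\colon X\to 0$ be morphisms with $\eta_A\varphi = !_{f(A)}\psi$. The key observation is that the pullback condition imposes nothing beyond what the product already gives, because the bottom-right corner is subterminal. For any pair $(\varphi,\psi)$ whatsoever, the equation $\eta_A\varphi=!_{f(A)}\psi$ holds automatically, again because both sides are morphisms $X\to f(A)$ and $f(A)\to 1$ is monic. So a cone over the cospan $A\to f(A)\leftarrow 0$ is exactly a pair $(\varphi,\psi)$ of morphisms $X\to A$ and $X\to 0$. By the universal property of the product $A\times 0$, there is a unique $\theta\colon X\to A\times 0$ with $\varepsilon_A\theta=\varphi$ and $\pi_0\theta=\psi$. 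This is precisely the pullback property.

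So the proof reduces to one general fact: a commutative square whose bottom-right vertex is subterminal is a pullback if and only if its top-left vertex, together with the two sides, is a product of the other two vertices. We then apply this to the product diagram $A\leftarrow A\times 0\to 0$. There is no real obstacle. The only point needing care is to check that the morphisms in \eqref{pushout} are the chosen ones, so that $t(A)\to 0$ is the product projection; this is harmless anyway, since by Proposition \ref{quasi-pointed}(c) there is at most one morphism $t(A)\to 0$. The strong-epi property of $\eta_A$ is not needed; only $f(A)\in\mathcal{F}$ is used.
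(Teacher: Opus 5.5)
Your proof is correct and follows essentially the paper's argument. The paper also combines the product property of $A\leftarrow t(A)\to 0$ (Lemma \ref{couniversal}) with the fact that $f(A)\to 1$ is a monomorphism, so that a pullback over $f(A)$ is the same as a pullback over $1$, i.e.\ a product; you simply unpack this into a direct check of the universal property.
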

\begin{proof}
	This follows from Lemma \ref{couniversal} and the fact that $f(A)\to1$ is a monomorphism, since the diagram \begin{equation}\label{pullback2}
	\xymatrix{t(A)\ar[d]_{\varepsilon_A}\ar[r]&0\ar[d]\ar[ddr]\\A\ar[drr]\ar[r]^-{\eta_A}&f(A)\ar[dr]\\&&1}\end{equation} commutes.
\end{proof}
Recall from \cite{[GJ2026]} that, given a near-torsion theory $(\mathcal{T},\mathcal{F})=(\mathcal{C},\mathcal{T},\mathcal{F},t,\varepsilon,f,\eta)$, 
one can associate the subcategories $\mathring{\mathcal{T}}$, $\mathring{\mathcal{F}}$, and $\mathcal{C}_{\mathcal{T},\mathcal{F}}$ of $\mathcal{C}$: they are defined as follows:
\begin{itemize}
\item $T\in\mathring{\mathcal{T}}$ if and only if $T\in\mathcal{T}$ and $\eta_T$ is an epimorphism;
\item $F\in\mathring{\mathcal{F}}$ if and only if $F\in\mathcal{F}$ and $\varepsilon_F$ is a monomorphism;
\item $B\in\mathcal{C}_{\mathcal{T},\mathcal{F}}$ if and only if $t(B)\in\mathring{\mathcal{T}}$, $f(B)\in\mathring{\mathcal{F}}$, the diagram $$\xymatrix{t(B)\ar[d]_{\varepsilon_B}\ar[r]^-{t(\eta_B)}&tf(B)\ar[d]^{\varepsilon_{f(B)}}\\B\ar[r]_-{\eta_B}&f(B)}$$ is a pullback, and the diagram $$\xymatrix{t(B)\ar[d]_{\varepsilon_B}\ar[r]^-{\eta_{t(B)}}&ft(B)\ar[d]^{f(\varepsilon_B)}\\B\ar[r]_-{\eta_B}&f(B)}$$ is a pushout.
\end{itemize}
The pair $(\mathring{\mathcal{T}},\mathring{\mathcal{F}})$ is then a (non-pointed) torsion theory in $\mathcal{C}_{\mathcal{T},\mathcal{F}}$. Moreover, $\mathring{\mathcal{T}}=\mathcal{T}\cap\mathcal{C}_{\mathcal{T},\mathcal{F}}$, $\mathring{\mathcal{F}}=\mathcal{F}\cap\mathcal{C}_{\mathcal{T},\mathcal{F}}$, and $\mathcal{C}_{\mathcal{T},\mathcal{F}}$ is the largest full subcategory $\mathcal{C}'$ of $\mathcal{C}$ such that every object of $\mathcal{C}'$ satisfies the conditions defining a (non-pointed) torsion theory in \cite{[GJ2026]}, with the monomorphisms, epimorphisms, pullbacks and pushouts involved being computed in $\mathcal{C}$.

In the quasi-pointed context the two diagrams above coincide: since $f(B)\in\mathcal{F}$ we have $tf(B)=0$ by Lemma \ref{torsion-free}, and since $t(B)\in\mathcal{T}$ we have $ft(B)=0$ by Lemma \ref{torsion}, so that both of them become the diagram \eqref{pushout}. Moreover $\varepsilon_{f(B)}\colon 0\to f(B)$ is a monomorphism by \ref{quasi-pointed}(b), while $\eta_{t(B)}\colon t(B)\to 0$ is a split epimorphism. The standing assumptions of the two lemmas of \cite{[GJ2026]} characterizing $\mathcal{C}_{\mathcal{T},\mathcal{F}}$ are therefore satisfied by every object of $\mathcal{C}$, and \eqref{pushout} is always a pullback by Lemma \ref{lemma-pullback}: the only condition left is that it should also be a pushout. We then have the following 
\begin{teo}\label{thm:3.2}
	Under the assumptions of Theorem \ref{counit-projection}, we have:
	\begin{itemize}
		\item [(a)] $\mathring{\mathcal{T}}=\mathcal{T}$;
		\item [(b)] $\mathring{\mathcal{F}}=\mathcal{F}$;
		\item [(c)] an object $A$ in $\mathcal{C}$ belongs to $\mathcal{C}_{\mathcal{T},\mathcal{F}}$ if and only if the diagram $$\xymatrix{t(A)\ar[d]_{\varepsilon_A}\ar[r]&0\ar[d]\\A\ar[r]_-{\eta_A}&f(A)}$$ is a pushout.\qed
	\end{itemize}
\end{teo}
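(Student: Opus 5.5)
The plan is to unwind the three definitions recalled from \cite{[GJ2026]} and check each clause using the quasi-pointed facts already established.

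For (a), take $T\in\mathcal{T}$. By Lemma \ref{torsion}, the unique morphism $T\to 0$ is a universal arrow for $\mathcal{F}\to\mathcal{C}$. Hence we may take $f(T)=0$ and $\eta_T\colon T\to 0$. This morphism is split epi, since $0\to T$ is a section: the composite $0\to T\to 0$ is the identity, $0$ being initial. In particular it is an epimorphism, so $T\in\mathring{\mathcal{T}}$.

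For (b), take $F\in\mathcal{F}$. By Lemma \ref{torsion-free} (or Lemma \ref{couniversal}) we may take $t(F)=0$ and $\varepsilon_F={!_F}\colon 0\to F$. This is a monomorphism by Proposition \ref{quasi-pointed}(b), so $F\in\mathring{\mathcal{F}}$. Alternatively, Theorem \ref{counit-projection}(a) already says that every $\varepsilon_A$ is monic.

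One point to address is that membership in $\mathring{\mathcal{T}}$ and $\mathring{\mathcal{F}}$ does not depend on the choice of (co)reflection. Being epi or mono is invariant under composing with isomorphisms, so this is harmless.

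For (c), I would apply the definition of $\mathcal{C}_{\mathcal{T},\mathcal{F}}$ to an object $A$ and check its four conditions.
\begin{itemize}
\item[(1)] $t(A)\in\mathring{\mathcal{T}}$ holds by (a), since $t(A)\in\mathcal{T}$.
\item[(2)] $f(A)\in\mathring{\mathcal{F}}$ holds by (b), since $f(A)\in\mathcal{F}$.
\item[(3)] The first square becomes diagram \eqref{pushout}, using $tf(A)=0$ with $\varepsilon_{f(A)}={!}$ and $t(\eta_A)$ the unique map $t(A)\to 0$. It is a pullback by Lemma \ref{lemma-pullback}.
\item[(4)] The second square is also diagram \eqref{pushout}, using $ft(A)=0$ with $\eta_{t(A)}$ the unique map to $0$, and $f(\varepsilon_A)$ the unique map $0\to f(A)$.
\end{itemize}
So the only remaining condition is that \eqref{pushout} be a pushout, which is exactly the stated criterion.

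The main obstacle is bookkeeping rather than mathematics: justifying that the squares in the definition really coincide with \eqref{pushout} for the chosen (co)reflections. Concretely, I must verify that $t(\eta_A)$ and $f(\varepsilon_A)$ are the unique morphisms $t(A)\to0$ and $0\to f(A)$.
\begin{itemize}
\item For $f(\varepsilon_A)$, the target is $f(A)$ and the source is initial, so the map is forced.
\item For $t(\eta_A)$, the codomain $tf(A)=0$ and $t(A)$ is $0$-supported, so uniqueness follows from Proposition \ref{quasi-pointed}(c).
\end{itemize}
Finally, pullbacks and pushouts are invariant under replacing the corners by isomorphic objects. Therefore the conditions do not depend on the chosen representatives of $t$ and $f$, and the equivalence in (c) follows.
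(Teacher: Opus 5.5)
Your proof is correct and follows the paper's route. The paper also uses $tf(A)=0$ and $ft(A)=0$ (Lemmas \ref{torsion-free} and \ref{torsion}) to identify both squares in the definition of $\mathcal{C}_{\mathcal{T},\mathcal{F}}$ with \eqref{pushout}, uses that $\varepsilon_{f(A)}$ is monic by \ref{quasi-pointed}(b) and $\eta_{t(A)}$ is split epic, and invokes Lemma \ref{lemma-pullback} for the pullback condition; your bookkeeping about the choice of (co)reflections and the identification of $t(\eta_A)$ and $f(\varepsilon_A)$ just makes explicit what the paper leaves implicit.
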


\begin{remark}
	\emph{In terminology of \cite{[B2001]}, condition \ref{thm:3.2}(c), together with the assertion of Lemma \ref{lemma-pullback}, means that \begin{equation}\label{SES}
	\xymatrix{&t(A)\ar[r]^-{\varepsilon_A}&A\ar[r]^-{\eta_A}&f(A) &}\end{equation}
	is a short exact sequence. This also agrees with the terminology of \cite{[FF2020]} and \cite{[FFG2021]}, since in these papers $\mathcal{T}\cap\mathcal{F}=\mathcal Z$ is the subcategory of ``trivial objects'', which in our quasi-pointed context is the full subcategory of initial objects. Kernels, cokernels and the exactness of the short exact sequence \eqref{SES} are then defined relatively to the ideal of morphisms induced by $\mathcal Z$. Note that the pair $(\mathcal T , \mathcal F)$ is then a pretorsion theory in $\mathcal{C}_{\mathcal{T},\mathcal{F}}$ in the sense of \cite{[FF2020],[FFG2021]} and it is one in $\mathcal C$  itself exactly when every object satisfies \ref{thm:3.2}(c).
		}\end{remark}
Consider examples, starting with two `trivial' ones:
\begin{example}
	If $\mathcal{C}$ is pointed and has an initial (= terminal) object, then it is automatically quasi-pointed and satisfies all our conditions. In this case $\mathcal{T}=\mathcal{C}=\mathcal{C}_{\mathcal{T},\mathcal{F}}$ and every subterminal object in $\mathcal{C}$ is terminal. Briefly, $(\mathcal{T},\mathcal{F})=(\mathcal{C},0)$ is a torsion theory in $\mathcal{C}$.
\end{example}
\begin{example}
	Suppose $\mathcal{C}$ has a strict initial object, that is, every morphism of the form $A\to0$ in it is an isomorphism. Then $\mathcal{C}$ is automatically quasi-pointed again, and we can write $A\times0=0$ for any object $A$. Then every $0$-supported object in $\mathcal{C}$ is initial, and  $\mathcal{T}$ is the coreflective full subcategory of initial objects. If, moreover, every morphism of the form $A\to1$ admits a (strong epi, mono)-factorization, so that the assumptions of Theorem \ref{counit-projection} are satisfied, then the diagram in \ref{thm:3.2}(c) is a pushout if and only if $\eta_A\colon A\to f(A)$ is an isomorphism, that is, if and only if $A$ is subterminal. In this case we therefore obtain nothing but $(\mathcal{T},\mathcal{F})=(0,\mathcal{F})$ as a torsion theory in $\mathcal{F}=\mathcal{C}_{\mathcal{T},\mathcal{F}}$.
\end{example}

\begin{examples}\label{Lie+crossed}
As follows from Theorem~\ref{thm:3.2} and the results of \cite{[B2001]},
$\mathcal{C}_{\mathcal{T},\mathcal{F}}=\mathcal{C}$ whenever $\mathcal{C}$ is
\emph{sequentiable} in the sense of \cite{[B2001]}, that is, whenever
$\mathcal{C}$ is regular and protomodular \cite{[B1991]} and $0\to 1$ is a
monomorphism in $\mathcal{C}$. In other words, in a sequentiable category the
pair $(\mathcal{T},\mathcal{F})$ is always a (non-pointed) torsion theory. Since the term ``sequential'' is mentioned, let us also mention a special role of $\mathcal{T}$: every morphism of any exact sequence in $\mathcal{C}$, except possibly the last one, has its domain in $\mathcal T$. Let us examine some examples of such torsion theories $(\mathcal{T},\mathcal{F})$
in more detail.

{\bf Lie-Rinehart algebras.}  Let $R$ be a commutative unital ring and $A$ a commutative $R$-algebra.
We write $\mathsf{Der}(A)$ for the Lie $R$-algebra of $R$-derivations of $A$,
with its natural $A$-module structure. A \emph{Lie--Rinehart algebra}
\cite{Rinehart} over $(R,A)$ is a Lie $R$-algebra $L$ equipped with an
$A$-module structure and an \emph{anchor}
$\alpha \colon L \to \mathsf{Der}(A)$, which is both a morphism of Lie
$R$-algebras and a morphism of $A$-modules, subject to the Leibniz rule
\[
[x,\, a y] = a\,[x,y] + \alpha(x)(a)\, y
\qquad \text{for all } x, y \in L,\ a \in A.
\]
A morphism $(L,\alpha) \to (L',\alpha')$ is an $A$-linear morphism of Lie
$R$-algebras $f \colon L \to L'$ such that $\alpha' \circ f = \alpha$.
As observed in \cite{Couchman}, the category $\mathsf{LR}$ of Lie--Rinehart
algebras over $(R,A)$ is exact and protomodular. It is also quasi-pointed, since the morphism
from the initial object $(0,0)$ to the terminal object
$(\mathsf{Der}(A), 1_{\mathsf{Der}(A)})$ is a monomorphism, hence $\mathsf{LR}$  is a
sequentiable category.
Since a morphism $(L,\alpha)\to(0,0)$ exists if and only if $\alpha=0$, and
the unique morphism $(L,\alpha)\to(\mathsf{Der}(A), 1_{\mathsf{Der}(A)})$ is
$\alpha$ itself, the torsion subcategory of $0$-supported objects and the
torsion-free subcategory of subterminal objects are described, respectively, as follows:
\[
\mathcal{T} = \{(L,\alpha) \mid \alpha = 0\}
\qquad\text{and}\qquad
\mathcal{F} = \{(L,\alpha) \mid \alpha \text{ is injective}\}.
\]
In other words, $\mathcal{T}$ is the category of Lie--Rinehart algebras with
trivial anchor (for which the Leibniz rule reduces to the $A$-bilinearity of
the bracket), and $\mathcal{F}$ is equivalent to the poset of Lie--Rinehart
subalgebras of $\mathsf{Der}(A)$.

Given a Lie--Rinehart algebra $(L,\alpha)$, the canonical short exact
sequence \eqref{SES} associated with it is then represented by the diagram
\[
\begin{tikzcd}[row sep=large, column sep=large]
& \ker(\alpha) \arrow[r, tail] \arrow[rd, "0"'] & L \arrow[r, two heads] \arrow[d, "\alpha"] & \alpha(L) \arrow[ld, hook']  \\
& & \mathsf{Der}(A) & &
\end{tikzcd}
\]
where $\ker(\alpha)$ carries the zero anchor, and the image $\alpha(L)$ of
$\alpha$ has the inclusion into $\mathsf{Der}(A)$ as anchor.

{\bf Internal graphs, reflexive graphs and groupoids over a fixed group.}
Let $B$ be a group. We write $\mathsf{Gph}_B(\mathsf{Grp})$ for the category of internal graphs in the category
$\mathsf{Grp}$ of groups having $B$ as ``object of objects'', so that an object is a diagram
$$\begin{tikzcd}
X \arrow[r, shift left, "d"] \arrow[r, shift right, "c"'] & B
\end{tikzcd}$$
in the category $\mathsf{Grp}$, and morphisms in $\mathsf{Gph}_B(\mathsf{Grp})$ preserve the domain and codomain morphisms. 
Since such a graph is the same thing as a homomorphism $(d,c)\colon X\to B\times B$, the category $\mathsf{Gph}_B(\mathsf{Grp})$ is nothing but the slice category $(\mathsf{Grp}\downarrow B\times B)$: its initial object is $0\to B\times B$, its terminal object is $1_{B\times B} \colon B\times B \rightarrow B\times B$, and the morphism $0\to 1$ is the monomorphism $0\to B\times B$. The category $\mathsf{Gph}_B(\mathsf{Grp})$ is then quasi-pointed, exact and protomodular, hence in particular it is sequentiable. In this category an object $(X,d,c)$ is $0$-supported
if and only if $d$ and $c$ are trivial morphisms, and it is subterminal if and only
if it is an (internal) relation on $B$. Accordingly, in the category $\mathsf{Gph}_B(\mathsf{Grp})$ we have 
\[
\mathcal{T}\cong\mathsf{Grp}
\qquad\text{and}\qquad
\mathcal{F}\simeq \mathsf{Rel}_B(\mathsf{Grp}),
\]
the latter being the poset of internal relations on $B$ in $\mathsf{Grp}$.
For a graph
$(X,d,c)$ the canonical short exact sequence \eqref{SES} is then depicted as 
$$\begin{tikzcd}[row sep=large, column sep=large]
\ker(d)\cap\ker(c)
  \arrow[r, tail]
  \arrow[d, shift left, "0"]
  \arrow[d, shift right, "0"']
& X
  \arrow[r, two heads, "{(d,c)}"]
  \arrow[d, shift left, "c"]
  \arrow[d, shift right, "d"']
& (d,c)(X)
  \arrow[d, shift left, "\pi_2"]
  \arrow[d, shift right, "\pi_1"']
\\
B \arrow[r, equal] & B \arrow[r, equal] & B
\end{tikzcd}
$$
where $(d,c)(X) = \{(d(x),c(x))\mid x\in X\}$ is the direct image of $X$ under $(d,c) \colon X \rightarrow B \times B$, and $\ker(d)\cap\ker(c)$ is the kernel of $(d,c)$ in $\mathsf{Grp}$.
 
Next, let $\mathsf{RG}_B(\mathsf{Grp})$ be the category of \emph{reflexive} graphs in
$\mathsf{Grp}$ with $B$ as object of objects. An object $(X,d,c,e)$  in $\mathsf{RG}_B(\mathsf{Grp})$  is a graph
$(X,d,c)$ as above together with a  `section' homomorphism $e\colon B\to X$ satisfying
$d\circ e=c \circ e=1_B$, and morphisms in $\mathsf{RG}_B(\mathsf{Grp})$  are also required to preserve sections. The
initial object is the discrete reflexive graph $(B,1_B,1_B,1_B)$, the
terminal object is the indiscrete one $(B\times B,p_1,p_2,\Delta)$, and
the morphism $0\to 1$ is the diagonal $\Delta\colon B\to B\times B$, which is a monomorphism. The category $\mathsf{RG}_B(\mathsf{Grp})$ is then quasi-pointed, and it is also exact and protomodular (see \cite{[EG2006]}, for instance).
A morphism from $(X,d,c,e)$ to the
discrete reflexive graph is a homomorphism $f\colon X\to B$ with $f=d=c$, so
that $(X,d,c,e)$ is $0$-supported if and only if $d=c$, which means that it is nothing but the ``point'' $(X,d,e)$ over $B$ in the sense of \cite{[B1991]}. It is subterminal if and only if
$(d,c)$ is injective, that is, if and only if $X$ is a reflexive relation
on $B$, hence if and only if it is an equivalence relation on $B$, since $\mathsf{Grp}$ is a Mal'tsev category. For a reflexive graph $(X,d,c,e)$ the canonical short exact sequence \eqref{SES} is then depicted as 
\begin{equation}\label{SES-RG}
\begin{tikzcd}[row sep=large, column sep=large]
E
  \arrow[r, tail]
  \arrow[d, shift left=2, "c_E"]
  \arrow[d, shift right=2, "d_E"']
& X
  \arrow[r, two heads, "{(d,c)}"]
  \arrow[d, shift left=2, "c"]
  \arrow[d, shift right=2, "d"']
& R
  \arrow[d, shift left=2, "\pi_2"]
  \arrow[d, shift right=2, "\pi_1"']
\\
B \arrow[r, equal] \arrow[u, "e_E" description]
& B \arrow[r, equal] \arrow[u, "e" description]
& B \arrow[u, "\Delta" description]
\end{tikzcd}
\end{equation}
where $E=\{x\in X\mid d(x)=c(x)\}$ is the reflexive subgraph of ``loops'' of $X$, with $d_E =c_E$ the restriction of $d$ (or $c$) and $e_E$ the corestriction of $e$, and $R = (d,c)(X)$ is the equivalence relation on $B$ whose classes are the connected components of $X$.
Accordingly,
\[
\mathcal{T}\cong\mathsf{Pt}_B(\mathsf{Grp})
\qquad\text{and}\qquad
\mathcal{F}\simeq{\mathsf{Eq}_B(\mathsf{Grp})},
\]
where $\mathsf{Pt}_B(\mathsf{Grp})$ is the category of points over $B$,
which is equivalent to the category of $B$-groups, and $\mathsf{Eq}_B(\mathsf{Grp})$
is the poset of equivalence relations on $B$ in $\mathsf{Grp}$.

Let $\mathsf{Grpd}_B(\mathsf{Grp})$ be the full subcategory of $\mathsf{RG}_B(\mathsf{Grp})$
whose objects are the internal groupoids in $\mathsf{Grp}$ with $B$ as
object of objects. Recall that a reflexive graph $(X,d,c,e)$ in
$\mathsf{Grp}$ carries at most one groupoid structure $m \colon X \times_BX \rightarrow X$, and it carries one
if and only if $[\ker(d),\ker(c)]=1$ \cite{Lod82}.
The category $\mathsf{Grpd}_B(\mathsf{Grp})$ is quasi-pointed,
exact and protomodular, hence sequentiable \cite{[B2001]}.
Subterminal objects are again the equivalence relations on $B$, while the $0$-supported objects
are the groupoids with $d=c$, i.e. the points $(X,d,e)$ over $B$ with
$[\ker(d),\ker(d)]=1$. These are precisely the abelian group objects in
$\mathsf{Pt}_B(\mathsf{Grp})$, that is, the \emph{Beck modules} over $B$
\cite{Beck}. Accordingly,
\[
\mathcal{T}\simeq\mathsf{Ab}(\mathsf{Pt}_B(\mathsf{Grp}))
\qquad\text{and}\qquad
\mathcal{F}\simeq{\mathsf{Eq}_B(\mathsf{Grp})}.
\]
The canonical short exact sequence of a groupoid $(X,d,c, e,m)$ is
still depicted as in \eqref{SES-RG}, where now $E$ is the totally
disconnected groupoid of automorphisms of $X$, and $R$ is the equivalence relation on $B$ relating two objects of $X$ precisely when they are isomorphic in $X$, so that its classes are the connected components of $X$.

{\bf Precrossed and crossed modules over a fixed group.}
The two previous examples can be reformulated in terms of (pre)crossed modules. Recall that a \emph{precrossed module} over $B$ is a group homomorphism $\alpha\colon A\to B$ (the boundary map), together with an action of $B$ on $A$, written $(b,a)\mapsto {}^{b}a$, such that $\alpha({}^{b}a)=b\,\alpha(a)\,b^{-1}$ for all $b\in B$ and $a\in A$; it is a \emph{crossed module} \cite{Whitehead} when it also satisfies the 
identity ${}^{\alpha(a)}a'=a\,a'\,a^{-1}$ for all $a,a'\in A$. Let $\mathsf{PXMod}_B(\mathsf{Grp})$ and $\mathsf{XMod}_B(\mathsf{Grp})$ be the corresponding categories, whose morphisms are the $B$-equivariant homomorphisms commuting with the boundary maps. There are then category equivalences
\[
\mathsf{RG}_B(\mathsf{Grp})\simeq \mathsf{PXMod}_B(\mathsf{Grp})
\qquad\text{and}\qquad
\mathsf{Grpd}_B(\mathsf{Grp})\simeq \mathsf{XMod}_B(\mathsf{Grp})
\]
(see \cite{BS1976}, and also \cite{Lod82, [EG2006]}, for instance, for the case of precrossed modules) sending a reflexive graph $(X,d,c,e)$ to $(\ker(d),\alpha)$, where $\alpha$ is the restriction of $c$ to $\ker(d)$ and $B$ acts on $\ker(d)$ by conjugation through $e$. Under these equivalences a precrossed module $\alpha\colon A\to B$ is $0$-supported if and only if $\alpha$ is trivial, and it is subterminal if and only if $\alpha$ is injective. The subgraph of loops $E$ and the equivalence relation $R$ of a reflexive graph correspond, respectively, to $\ker(\alpha)$, with trivial boundary and the restricted action, and to the normal subgroup $\alpha(A)$ of $B$, with the inclusion as boundary and the conjugation action. In $\mathsf{PXMod}_B(\mathsf{Grp})$ the torsion subcategory is therefore the category of $B$-groups, the torsion-free subcategory is the poset $\mathsf{N}(B)$ of normal subgroups of $B$, which is of course equivalent to $\mathsf{Eq}_B(\mathsf{Grp})$, and the canonical short exact sequence \eqref{SES} of a precrossed module $\alpha\colon A\to B$ is represented by the diagram
\[
\begin{tikzcd}[row sep=large, column sep=large]
\ker(\alpha) \arrow[r, tail] \arrow[rd, "0"'] & A \arrow[r, two heads] \arrow[d, "\alpha"] & \alpha(A) \arrow[ld, hook']  \\
& B &
\end{tikzcd}
\]
In $\mathsf{XMod}_B(\mathsf{Grp})$ the same description holds, with the only difference that the identity ${}^{\alpha(a)}a'=a\,a'\,a^{-1}$ forces a crossed module with trivial boundary to be abelian, so that the torsion subcategory is the category $\mathsf{Mod}_{\mathbb{Z}[B]}$ of $B$-modules. The diagram above then recovers the classical decomposition of a crossed module $\alpha\colon A\to B$ into its kernel, which is a $B$-module and a central subgroup of $A$, and its image, which is a normal subgroup of $B$.

More generally, everything we have said holds for internal graphs, reflexive graphs and groupoids with a fixed object of objects $B$ in any semi-abelian category $\mathcal{A}$ in the sense of \cite{JMT}, and hence, thanks to the results in \cite{Janelidze}, for internal precrossed and crossed modules over $B$ in $\mathcal{A}$. The torsion objects of $\mathsf{XMod}_B(\mathcal A)$ are the \emph{internal $B$-modules}, that is, the abelian objects of $\mathsf{Pt}_B(\mathcal{A})$, while the torsion-free objects correspond to the \emph{normal subobjects of }$B$, which classify the internal equivalence relations on $B$ since $\mathcal{A}$ is semi-abelian.

\end{examples}

\begin{example}
	Let us take $\mathcal{C}$ to be the quasi-pointed category $(\mathsf{Mon}\downarrow G)$, where $\mathsf{Mon}$ is the category of monoids, and $G$ is a finite group.  
	
	Here $\mathcal{T}$ is isomorphic to $\mathsf{Mon}$, identified with the objects $(M,\varphi )$ whose $\varphi \colon M \rightarrow G$ is trivial, while $\mathcal{F}$ is equivalent to the poset of submonoids of $G$, that is, of subgroups of $G$, since $G$ is finite.
The assumptions of Theorem \ref{counit-projection} are easily seen to be satisfied, with $t(M, \varphi)= \varphi^{-1}(1)$ and $f(M,\varphi) = \varphi(M)$.  Since pushouts in $\mathcal{C}$ are computed as in $\mathsf{Mon}$, the diagram in Theorem \ref{thm:3.2}(c) is a pushout if and only if the canonical surjective morphism $\pi \colon  M/{\sim} \to \varphi(M)$ is an isomorphism, where $\sim$ is the congruence on $M$ generated by the pairs $(n,1)$ with $n \in \varphi^{-1}(1)$. For every $m \in M$ the element $\varphi(m)$ of the finite group $G$ has a finite order $k$, so that $m^{k} \in \varphi^{-1}(1)$ and $[m]^{k} = [1]$ in $ M/{\sim} $; being generated by invertible elements, the monoid $M/{\sim}$ is a group. If $\pi([m]) = 1$, then $m \in \varphi^{-1}(1)$, and so $[m] = [1]$; hence $\pi$ is injective, and therefore a group isomorphism. This gives $\mathcal{C}_{\mathcal{T},\mathcal{F}} = \mathcal{C}$, and $(\mathcal{T},\mathcal{F})$ is a torsion theory in $\mathcal{C}$, even though $\mathcal{C}$ is not protomodular.
	Note that the finiteness of $G$ has only been used to guarantee that every element of $G$ has a finite order: the same conclusion therefore holds whenever $G$ is a torsion group.
\end{example}

{\bf Acknowledgement} The first author's research was supported by the Fonds de la Recherche Scientifique -- FNRS under Grant CDR No.~J.0092.26. The second author gratefully acknowledges the financial support of UCLouvain, which made possible a research visit in July 2026 during which part of this work was carried out.
{}
	

\begin{thebibliography}{99}
		
		
		
		\bibitem{Beck} J. Beck, Triples, algebras and cohomology, Ph.D. thesis, Columbia University, 1967; reprinted in Repr. Theory Appl. Categ. \textbf{2} (2003), 1--59.
		
		\bibitem{[B1991]} D. Bourn, Normalization equivalence, kernel equivalence and affine categories, in: Category Theory (Como, 1990), Lecture Notes in Math. \textbf{1488}, Springer, Berlin, 1991, 43-62
		
		\bibitem {[B2001]} D. Bourn, $3\times3$-Lemma and Protomodularity, J. Algebra \textbf{236} (2001), 778-795
		
	\bibitem{BS1976} R. Brown and C. B. Spencer, $\mathcal{G}$-groupoids, crossed modules and the fundamental groupoid of a topological group, Indag. Math. \textbf{38} (1976), 296--302.
		
		\bibitem{Couchman} J.~Couchman, Crossed modules and internal categories of Lie--Rinehart algebras, Master of Research thesis, Macquarie University, 2017.
		
		\bibitem {[EG2006]} T. Everaert and M. Gran, Precrossed modules and Galois theory, J. Algebra \textbf{297} (2006), 292-309.
		
		
		\bibitem {[FF2020]} A. Facchini and C. A. Finocchiaro, Pretorsion theories, stable category and preordered sets, Ann. Mat. Pura Appl. \textbf{199} (2020), 1073-1089
		
		\bibitem {[FFG2021]} A. Facchini, C. A. Finocchiaro, and M. Gran, Pretorsion theories in general categories, J. Pure Appl. Algebra \textbf{225} (2021), 106503
		
		\bibitem {[GJ2026]} M. Gran and G. Janelidze, Torsion and exactness, preprint (2026)  arXiv:2608.23804
		
		\bibitem{Janelidze} G.~Janelidze, Internal crossed modules, Georgian Math.\ J.\ \textbf{10} (2003), 99--114.
		
	
\bibitem{JMT} G. Janelidze, L. M\'arki and W. Tholen, Semi-abelian categories, J. Pure Appl. Algebra \textbf{168} (2002), 367--386.
		
		\bibitem{Lod82} J.-L. Loday, \emph{Spaces with finitely many non-trivial homotopy groups}, J. Pure Appl. Algebra \textbf{24} (1982), no.~2, 179--202.
		
		\bibitem{Rinehart} G.~S.~Rinehart, Differential forms on general commutative algebras, Trans.\ Amer.\ Math.\ Soc.\ \textbf{108} (1963), 195--222.
		
	\bibitem{Whitehead} J.~H.~C.~Whitehead, Combinatorial homotopy. II,  Bull.\ Amer.\ Math.\ Soc.\ \textbf{55} (1949), 453--496.
		
	\end{thebibliography}
\end{document}